\theoremstyle{plain}
\newtheorem*{theorem*}{Theorem}
\newtheorem{theorem}{Theorem}[section]
\newtheorem{claim}[theorem]{Claim}
\newtheorem{proposition}[theorem]{Proposition}
\newtheorem*{claim*}{Claim}
\theoremstyle{remark}
\DeclareMathOperator\Deg{d}
\newcommand{\ex}{\mathrm{ex}}
\newcommand{\Forb}{\mathrm{Forb}} 
\newcommand{\CC}{\mathscr{C}}
\newcommand{\CH}{\mathcal{H}}
\newcommand{\CF}{\mathscr{F}}
\newcommand{\N}{\mathbb{N}}
\let\eps\varepsilon
\let\originalleft\left
\let\originalright\right
\renewcommand{\left}{\mathopen{}\mathclose\bgroup\originalleft}
\renewcommand{\right}{\aftergroup\egroup\originalright}
\begin{document}

\title{The number of hypergraphs without linear cycles}

\author{J\'{o}zsef Balogh}
\address{Department of Mathematics, University of Illinois, 1409 W.\/ Green Street, Urbana IL 61801, USA}
\email{jobal@math.uiuc.edu}

\author{Bhargav Narayanan}
\address{Department of Pure Mathematics and Mathematical Statistics, University of Cambridge, Wilberforce Road, Cambridge CB3\thinspace0WB, UK}
\email{b.p.narayanan@dpmms.cam.ac.uk}

\author{Jozef Skokan}
\address{Department of Mathematics, London School of Economics, Houghton Street, London WC2A\thinspace2AE, UK, \emph{and} Department of Mathematics, University of Illinois, 1409 W.\/ Green Street, Urbana IL 61801, USA}
\email{j.skokan@lse.ac.uk}

\date{5 June 2017}

\subjclass[2010]{Primary 05D10; Secondary 05D40}

\begin{abstract}
The $r$-uniform linear $k$-cycle $C^r_k$ is the $r$-uniform hypergraph on $k(r-1)$ vertices whose edges are sets of $r$ consecutive vertices in a cyclic ordering of the vertex set chosen in such a way that every pair of consecutive edges share exactly one vertex. Here, we prove a balanced supersaturation result for linear cycles which we then use in conjunction with the method of hypergraph containers to show that for any fixed pair of integers $r, k \ge 3$, the number of $C^r_k$-free $r$-uniform hypergraphs on $n$ vertices is $2^{\Theta(n^{r-1})}$, thereby settling a conjecture due to Mubayi and Wang.
\end{abstract}
\maketitle

\section{Introduction}
The general problem of asymptotically enumerating discrete structures with various forbidden substructures has a very rich history. The simplest such question that one may ask is as follows: given a fixed graph $H$, how many $n$-vertex graphs are there that contain no copy of $H$? In the case where the fixed forbidden graph $H$ is not bipartite, reasonably precise estimates are available from the work of Erd\H{o}s, Frankl and R\"odl~\citep{efr}; see also~\citep{ekr}. On the other hand, the case where the fixed forbidden graph $H$ is bipartite remains an active area of investigation since the corresponding `Tur\'an problem' of determining the maximum number of edges in an $n$-vertex $H$-free graph remains open in general; consequently, much of the work in this case has been focused on understanding the behaviour of important prototypical examples such as complete bipartite graphs (see \citep{bip1, bip2}) and even cycles (see \citep{ bip3, bip4}). Of course, one may ask similar questions for various other discrete structures; see~\citep{survey} for a broad overview of the area.

In this paper, we shall be concerned with enumerating uniform hypergraphs. For an integer $r \ge 2$, an \emph{$r$-uniform hypergraph} (or \emph{$r$-graph} for short) is a pair $(V,E)$ of finite sets, where the edge set $E$ is a family of $r$-element subsets of the vertex set $V$. For a fixed $r$-graph $H$ and a natural number $n \in \N$, the corresponding \emph{Tur\'an problem} asks for the determination of $\ex_r (n, H)$, the maximum number of edges in an $n$-vertex $r$-graph that contains no copy of $H$ as a subgraph, and the associated \emph{enumeration problem} asks for the determination of $|\Forb_r(n ,H)|$, where $\Forb_r(n,H)$ denotes the family of all $H$-free $r$-graphs on the vertex set $[n] = \{1, 2, \dots, n\}$. The Tur\'an problem and the enumeration problem for a given $r$-graph are closely related; indeed, for a fixed $r$-graph $H$, we trivially have
\begin{equation}\label{triv}
2^{\ex_r (n,H)} \le |\Forb_r (n ,H)| \le \sum_{i \le \ex_r(n,H)} \binom{\binom{n}{r}}{i}  = n^{O(\ex_r(n,H))}.
\end{equation}
We remark that it is generally believed that the lower bound in~\eqref{triv} is closer to the truth (provided $H$ is not `degenerate'), and indeed, all existing results in the area support this belief.

Mirroring the situation described earlier for graphs, for each $r \ge 3$, the enumeration problem for a fixed forbidden $r$-graph $H$ has a reasonably satisfactory solution in the case where $H$ is not $r$-partite. Indeed, it follows from the work of Nagle, R\"odl and Schacht~\citep{regl} on hypergraph regularity that for any fixed $r$-graph $H$, we have
\begin{equation}\label{regularity}
|\Forb_r (n ,H)| \le 2^{\ex_r (n,H) + o(n^r)};
\end{equation}
since $\ex_r (n,H) = \Theta(n^r)$ for any $r$-graph $H$ that is not $r$-partite, the above bound complements the trivial lower bound in~\eqref{triv}. 

However, the enumeration problem for a fixed forbidden $r$-partite $r$-graph $H$ is poorly understood; indeed, for any such $H$, we know that $\ex_r (n,H) = O(n^{r-\eps})$ for some constant $\eps>0$ depending on $H$ alone, so the upper bound from~\eqref{regularity} is some ways off from the trivial lower bound in~\eqref{triv}. Consequently, as in the case of graphs, it is important to understand the behaviour of prototypical examples of $r$-partite $r$-graphs. Here, following Mubayi and Wang~\citep{MW}, we shall investigate the enumeration problem for one such prototypical family of $r$-partite $r$-graphs, namely, the family of $r$-uniform \emph{linear (or loose) cycles}. 

For integers $r \ge 2$ and $k \ge 3$, the \emph{$r$-uniform linear $k$-cycle $C^r_k$} is the $r$-graph on $k(r-1)$ vertices whose vertices can be ordered cyclically in such a way that the edges are sets of $r$ consecutive vertices in this ordering such that every two consecutive edges share exactly one vertex. It is known from the work of F\"uredi and Jiang~\citep{FJ} and of Kostochka, Mubayi and Verstra\"ete~\citep{KMV} that for any fixed $r, k \ge 3$, we have $\ex_r(n,C^r_k) = \Theta(n^{r-1})$; it then follows from~\eqref{triv} that we trivially have
\begin{equation}\label{cyc-triv}
|\Forb_r (n ,C^r_k)| = 2^{\Omega(n^{r-1})} \hspace{10pt} \text{and} \hspace{10pt} |\Forb_r (n ,C^r_k)| = n^{O(n^{r-1})}
\end{equation}
for any fixed $r, k \ge 3$. Since the lower bound in~\eqref{triv} is generally believed to be closer to the truth, Mubayi and Wang~\citep{MW} made the natural conjecture that 
\begin{equation}\label{MW-conj}
	|\Forb_r (n ,C^r_k)| = 2^{O(n^{r-1})}
\end{equation}
for any fixed $r,k \ge 3$, and also established this conjecture in the case where $r=3$ and $k \ge 3$ is even; they also showed that the trivial upper bound in~\eqref{cyc-triv} is not sharp for general $r$ and $k$, and their improvements over the trivial upper bound were subsequently refined by Han and Kohayakawa~\citep{HK}. Here, we shall establish~\eqref{MW-conj} in full generality, thereby resolving the conjecture of Mubayi and Wang~\citep{MW}; our main result is as follows.

\begin{theorem}\label{enumthm}
For every pair of integers $r, k \ge 3$, there exists $C = C(r,k) > 0$ such that 
\[|\Forb_r (n ,C^r_k)| \le 2^{Cn^{r-1}}\]
for all $n \in \N$.
\end{theorem}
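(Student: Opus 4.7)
The plan is to follow the now-standard two-stage strategy for extremal enumeration problems: establish a balanced supersaturation theorem for $C^r_k$, and then feed it into the hypergraph container machinery (as in the abstract). Writing $m_0 = \ex_r(n, C^r_k) = \Theta(n^{r-1})$, the target supersaturation statement is: there are constants $K = K(r,k)$ and $c = c(r,k) > 0$ so that for every $n$-vertex $r$-graph $G$ with $m = e(G) \ge K m_0$, the $k$-uniform hypergraph $\CH$ on vertex set $E(G)$ whose edges are the edge-sets of copies of $C^r_k$ in $G$ satisfies $e(\CH) \ge c\, m^k/n^{k(r-1)-1}$ and, for every $2 \le s \le k$ and every $S \subseteq E(G)$ with $|S| = s$, the codegree bound
\[ d_{\CH}(S) \;\le\; c^{-1} \left(\frac{m}{m_0}\right)^{k-s} \frac{e(\CH)}{m^s} \]
holds (up to a log-factor that is harmless for the container method). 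These are exactly the hypotheses needed to apply either the Balogh--Morris--Samotij or the Saxton--Thomason container theorem.

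To prove the supersaturation bound, I would first clean $G$ to a subgraph $G'$ on which pairwise codegrees are bounded and in which most edges have close-to-average degree, losing only a constant fraction of edges in the process; the key tool here is the extremal bound $\ex_r(n, C^r_k) = O(n^{r-1})$ of F\"uredi--Jiang and Kostochka--Mubayi--Verstra\"ete, combined with iterative deletion of edges whose local structure is atypical. Copies of $C^r_k$ are then counted by the natural greedy/embedding argument: one picks a starting edge, then repeatedly extends by choosing an overlap vertex and a new edge through it, closing up after $k$ steps. Because $G'$ has been made regular, each extension step yields the expected multiplicative factor, giving the required global lower bound on $e(\CH)$. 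For the codegree bound, one fixes the $s$ already-chosen edges of a partial configuration and counts completions in the same way: the bookkeeping shows that pre-specifying $s$ edges saves exactly the factor $m^s$ relative to the total, with an extra factor $(m/m_0)^{k-s}$ accounting for the extensions that remain. The main obstacle throughout this step is ensuring that the codegrees are genuinely \emph{balanced} across all scales $s = 2, \dots, k$ simultaneously; a naive extension count is easy but almost always violates the codegree bound for some $s$, and the cleaning step must be designed so that after deletion every surviving $s$-tuple of edges has few completions.

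With balanced supersaturation in hand, the container method produces, in a standard way, a family $\CC$ of $r$-graphs on $[n]$ such that (i) every $C^r_k$-free $r$-graph on $[n]$ is a subgraph of some $C \in \CC$, (ii) each $C \in \CC$ has at most $K' n^{r-1}$ edges for some $K' = K'(r,k)$, and (iii) $|\CC| \le 2^{O(n^{r-1})}$. The theorem then follows immediately from
\[ |\Forb_r(n, C^r_k)| \;\le\; \sum_{C \in \CC} 2^{e(C)} \;\le\; |\CC| \cdot 2^{K' n^{r-1}} \;=\; 2^{O(n^{r-1})}. \]
Thus essentially all the work is in the supersaturation step; once that is done, the container theorem delivers the enumeration bound mechanically.
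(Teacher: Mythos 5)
Your overall strategy --- a balanced supersaturation theorem for $C^r_k$ fed into the hypergraph container theorem, followed by the count $\sum_{C}2^{e(C)}$ over containers --- is exactly the paper's, and you are right that essentially all the work is in the supersaturation step. But your sketch of that step has a genuine gap, in two places. First, the cleaning you propose (pass to a subgraph $G'$ in which pairwise codegrees are bounded \emph{above} and degrees are near-regular, losing only a constant fraction of edges) is not achievable in general: an $r$-graph with $tn^{r-1}$ edges may have $\Theta(n^{r-2})$ edges through a single pair, and such pairs cannot always be destroyed by deleting a small fraction of the edges, particularly once the container iteration has driven $t$ down to a constant --- precisely the regime the supersaturation theorem must cover. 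The paper's cleaning goes in the opposite direction: it enforces only \emph{lower} bounds ($\Deg_F(\pi)\ge tn^{r-3}$ for pairs and $\Deg_F(\sigma)\ge tn^{r-4}$ for triples contained in surviving edges), which is cheap, and makes no attempt to bound codegrees of $G$ from above.

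Second, and more importantly, even granting a perfectly regular $G'$, taking $\CH$ to be \emph{all} copies of $C^r_k$ and counting extensions greedily cannot give balanced codegrees. Fix $k-1$ edges forming a linear path: the number of completions to a linear cycle is, up to constants, the codegree in $G'$ of the pair of endpoint core vertices, which is of order $tn^{r-3}$ even in an ideally regular host, whereas the bound required (the paper's $\Delta_{k-1}(\CH)\le Kt^2n^{r-4}$ for $r\ge4$) is smaller by a factor of $n/t$. You flag this issue yourself, but propose to fix it by designing the deletion step, and no deletion scheme can: the balance must come from \emph{which copies are admitted into $\CH$}, not from the host graph. This is the paper's key idea: each copy is generated by an algorithm in which the number of choices at every step is capped from above as well as below --- support vertices are chosen only among the $\Theta(t)$ neighbours in an auxiliary almost-regular graph $\Gamma(\pi)$ placed on the link of each pair $\pi$ (Proposition~\ref{almost-reg}), and each non-final edge is chosen only from a fixed set $\Lambda(\sigma)$ of exactly $D_3$ edges through the relevant triple $\sigma$. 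The lower bound on $e(\CH)$ and the upper bounds on every $\Delta_j(\CH)$ then follow from the same construction, with the skeleton and canonical-triangulation bookkeeping (Propositions~\ref{overcount} and~\ref{triorder}) used to reconstruct a copy from any $j$ of its edges in $O(t^{2k-2j-1})$ ways. A secondary point: your target codegree bound $d_\CH(S)\le c^{-1}(m/m_0)^{k-s}e(\CH)/m^s$ is not in the form the container theorem consumes --- the natural normalisation is $\Delta_s(\CH)\lesssim p^{s-1}e(\CH)/m$ for the chosen $p$ --- and as written it forces $d_\CH(S)<1$ for $s$ close to $k$, so the exponents would need to be reworked in any case.
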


Our proof of Theorem~\ref{enumthm} proceeds by iteratively applying a construction of `hypergraph containers' introduced independently by Balogh, Morris and Samotij~\citep{BMS} and by Saxton and Thomason~\citep{ST}. To make use of the framework of hypergraph containers, we prove a `balanced supersaturation' theorem for linear cycles: this result roughly states that an $r$-graph $G$ on $n$ vertices with significantly more than $\ex_r(n, C^r_k)$ edges contains many copies of $C^r_k$ which are additionally distributed relatively uniformly over the edges of $G$; as remarked in~\citep{MW}, this result might be of independent interest. 

A brief word on how our approach to proving Theorem~\ref{enumthm} compares to the approach adopted by Saxton and Morris~\citep{bip4} to count graphs without even cycles is perhaps in order. At a very high level, both proofs are based on combining an appropriate balanced supersaturation result with the method of hypergraph containers; however, the arguments used to establish balanced supersaturation are qualitatively very different in the two cases. Indeed, as is evident from~\cite{BL}, the problem of enumerating graphs without copies of even cycles more resembles the problem of enumerating $C^r_k$-free \emph{linear} hypergraphs than the problem at hand here.

This paper is organised as follows. We set up some notation and collect together the results we need for the proof of our main result in Section~\ref{s:prelim}. We state and prove our `balanced supersaturation' theorem for linear cycles in Section~\ref{s:supersat}, and then demonstrate how to deduce Theorem~\ref{enumthm} from this result in Section~\ref{s:proof}. We conclude with some discussion of open problems in Section~\ref{s:conc}.

\section{Preliminaries}\label{s:prelim}
For $n \in \N$, we denote the set $\{1, 2, \dots, n\}$ by $[n]$. For a set $X$, we write $\mathcal{P}(X)$ for the set of all subsets of $X$, and given $r \in \N$, we write $X^{(r)}$ for the family of $r$-element subsets of $X$. In this language, an $r$-graph is a pair $(V,E)$ of finite sets with $E \subset V^{(r)}$; also, as is customary, we shall always refer to $2$-graphs as graphs.

Let $G = (V,E)$ be an $r$-graph. For a set $\sigma \subset V(G)$ with $1 \le |\sigma| \le r$, we define $N_G(\sigma)$ to be the set of edges of $G$ containing $\sigma$, i.e.,
\[N_G(\sigma)=\{e \in E(G): \sigma \subset e\},\]
and we define its \emph{degree $\Deg_G(\sigma)$} by $\Deg_G (\sigma)  = |N_G(\sigma)|$. Next, for $1 \le j \le r$, we define the \emph{maximum $j$-degree $\Delta_j(G)$} of $G$ by 
\[\Delta_j(G)=\max\{\Deg_G(\sigma): \sigma\subset V(G) \text{ and } |\sigma|=j\}.\]
Also, we define the \emph{average degree $d(G)$} of $G$ by $ d(G) = r|E(G)|/|V(G)|$. Finally, for $p\in (0,1)$, the \emph{co-degree function $\delta(G,p)$} of $G$ is given by
\[\delta(G,p)=\frac{1}{d(G)} \sum_{j=2}^r \frac{\Delta_j(G)}{p^{j-1}}.\]

Given an $r$-graph $G$, we write $G[U]$ for the subgraph of $G$ induced by a set $U \subset V(G)$. A subset $I \subset V(G)$ of the vertex set of an $r$-graph $G$ is said to be \emph{independent} in $G$ if no edge of $G$ is contained in $I$; equivalently, $I$ is independent in $G$ if $G[I]$ is empty. We shall make use of the following hypergraph container theorem; see~\citep{ST}, for example.

\begin{theorem}\label{thm-container}
For each $r \in \N$, there exist positive constants $c_1 = c_1(r)$, $c_2 = c_2(r)$ and $c_3 = c_3(r)$ such that the following holds for all $N \in \N$. For each $0 < \eps < 1/2$ and each $N$-vertex $r$-graph $G$, if $0 < p < c_1$ is such that $\delta(G,p)\le c_2 \eps$, then there exists a family $\CC \subset \mathcal{P}(V(G))$ of at most
\[ \exp \left(c_3 N p \log(1/p) \log(1/\eps)\right) \]
subsets of $V(G)$ (called the containers of $G$) such that
\begin{enumerate}
\item for each independent set $I \subset V(G)$, there exists a container $U \in \CC$ such that $I \subset U$, and
\item $|E(G[U])|\le\eps |E(G)|$ for each container $U\in\CC$. \qed
\end{enumerate}
\end{theorem}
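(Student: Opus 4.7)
The plan is to follow the standard \emph{fingerprinting} strategy of Balogh--Morris--Samotij and Saxton--Thomason. The goal is to design an algorithm $\mathcal{A}$ that takes as input any independent set $I \subset V(G)$ and outputs a pair $(S, U)$ with $S \subset I \subset U$, such that (i) the fingerprint $S$ has size $|S| \le s$ with $s = O(Np\log(1/\varepsilon))$, (ii) the container $U$ is a function of $S$ alone, and (iii) $|E(G[U])| \le \varepsilon|E(G)|$. Taking $\CC = \{U(S) : S \in V(G)^{(\le s)}\}$ then yields a container family of the required size, since $|\CC| \le \sum_{i \le s} \binom{N}{i} \le \exp(c_3 Np \log(1/p)\log(1/\varepsilon))$.

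First I would fix a total order on $V(G)$ (for instance one that refines the order by degree). The algorithm maintains the invariant $S \subset I \subset U$, initialised to $S = \emptyset$ and $U = V(G)$. At each iteration, if $|E(G[U])| \le \varepsilon|E(G)|$ the algorithm halts; otherwise, it selects the minimum-order vertex $v \in I \cap U$ witnessing ``large multi-degree'' in $G[U]$, appends $v$ to $S$, and prunes $U$ as follows: for each $2 \le j \le r$ and each $j$-set $\sigma \ni v$ with $\Deg_{G[U]}(\sigma)$ atypically large, remove from $U$ every vertex $w$ with $\sigma \cup \{w\} \in E(G)$. Such a $w$ cannot lie in $I$, because once $\sigma \subset S \subset I$ the independence of $I$ rules it out. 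Since both the vertex order and the pruning rules depend only on the current $U$, the terminal container is recoverable from $S$ alone.

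The analysis then consists in showing that the hypothesis $\delta(G,p) \le c_2 \varepsilon$ forces $\mathcal{A}$ to terminate within $s = O(Np \log(1/\varepsilon))$ iterations. The key estimate is a double-counting argument: whenever $G[U]$ has more than $\varepsilon|E(G)|$ edges, a weighted sum of the $j$-degrees of $G[U]$ is controlled by $\delta(G[U],p)\,d(G[U])$, and this forces the existence of a vertex whose pruning step eliminates at least a $(1-p)$-fraction of the ``candidate'' set at some level $j$. Averaging the resulting gain in edges across iterations gives the stated bound on $|S|$. The main obstacle is precisely this selection step for $r > 2$: one must simultaneously balance contributions across all arities $2 \le j \le r$, which is exactly what the co-degree function $\delta(G,p)$ is designed to package into a single controllable quantity. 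A secondary technical subtlety is arranging that each iteration makes genuine progress in shrinking $U$; this is handled by maintaining an auxiliary ``available'' set alongside $U$ so that no vertex is processed twice and the potential function used to bound the number of iterations decreases strictly.
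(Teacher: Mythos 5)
The paper does not prove this statement: it is quoted verbatim as a known result, with a pointer to Saxton--Thomason (and it is due independently to Balogh--Morris--Samotij), which is why the statement ends with \qed{} and no proof environment follows. So there is no internal argument to compare yours against; the relevant question is whether your sketch would stand on its own as a proof, and as written it would not.

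You have correctly identified the standard fingerprinting strategy, but everything that makes the container theorem hard is deferred. The entire quantitative content --- the precise selection rule, the potential function, the termination bound $|S| = O(Np\log(1/\eps))$, and the verification that $U$ is reconstructible from $S$ alone --- appears only as ``the analysis then consists in showing that\dots''. These are the parts that occupy most of the original papers, and asserting that a double-counting argument ``forces the existence of a vertex whose pruning step eliminates at least a $(1-p)$-fraction of the candidate set'' is not a proof of that fact. Moreover, your pruning rule contains a concrete error: you delete from $U$ every $w$ with $\sigma\cup\{w\}\in E(G)$ for some high-degree $j$-set $\sigma\ni v$, and justify this by saying that ``once $\sigma\subset S\subset I$ the independence of $I$ rules it out''. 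But only $v$ has been placed in $S$; the other $j-1$ vertices of $\sigma$ need not lie in $I$, so $w$ may well belong to $I$ and the invariant $I\subset U$ is broken. (Also, for $j<r-1$ the set $\sigma\cup\{w\}$ is not an edge of an $r$-graph, so the deletion criterion is not even well-typed.) The actual argument avoids this by maintaining a hierarchy of auxiliary hypergraphs of decreasing uniformity, in which the link of a high-degree $j$-set is recorded as a new $(j-1)$-uniform (or lower) structure and is only ``activated'' against $I$ once all of its vertices have genuinely been certified to lie in $I$; it is precisely to control this hierarchy across all arities $2\le j\le r$ that the co-degree function $\delta(G,p)$ is defined as it is. If your intent is to use this theorem as a black box, as the paper does, you should simply cite it; if you intend to prove it, the sketch needs to be replaced by the full multi-level algorithm and its analysis.
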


Next, we introduce some notation for working with linear cycles. For integers $r \ge 2$ and $k \ge 3$, recall that the $r$-uniform linear $k$-cycle $C^r_k$ is the $r$-graph on $k(r-1)$ vertices whose vertices can be ordered cyclically in such a way that the edges (of which there are precisely $k$) are sets of $r$ consecutive vertices in this ordering such that every two consecutive edges share exactly one vertex. The \emph{core} of an $r$-uniform linear $k$-cycle is the set of those $k$ vertices of the cycle which are each contained in two edges of the cycle. For $r \ge 3$, a \emph{skeleton} of an $r$-uniform linear $k$-cycle is a triple $(\mathbf{e}, \mathbf{v}, \mathbf{u})$, where $\mathbf{e} = (e_1, e_2, \dots, e_k)$ is an ordering of the edges of the cycle, $\mathbf{v} = (v_1, v_2, \dots, v_k)$ is an ordering of the core of the cycle, and  $\mathbf{u} = (u_1, u_2, \dots, u_k)$ is a choice of $k$ non-core vertices of the cycle such that
\begin{enumerate}
\item $v_i, v_{i+1} \in e_i$ for each $1 \le i \le k$ with the convention that $v_{k+1} = v_1$, and
\item $u_i \in e_i$ for each $1 \le i \le k$.
\end{enumerate}
Given a skeleton $(\mathbf{e}, \mathbf{v}, \mathbf{u})$ as above, we call the set $\{v_1, u_1, v_2, u_2, \dots, v_k, u_k\}$ the \emph{support} of the skeleton, and we call the graph on the support depicted in Figure~\ref{can-tri} the \emph{canonical triangulation} of the (support of the) skeleton.

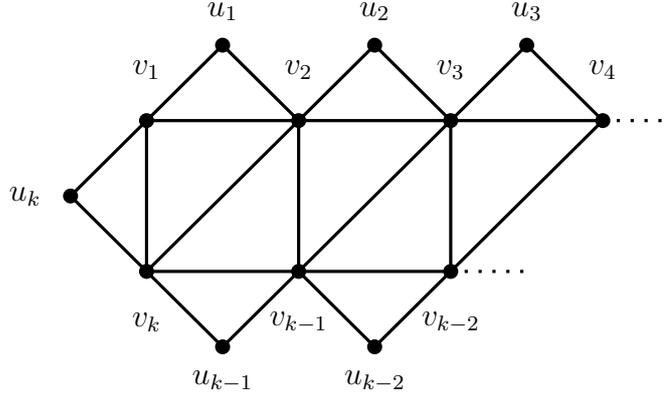
\begin{figure}
	\begin{center}
		
		\begin{tikzpicture}
		
		\foreach \x in {1,3,5}
		\node (\x) at (\x, 0) [inner sep=0.7mm, circle, fill=black!100] {};
		\foreach \x in {1,3,5,7}
		\node (\x) at (\x, 2) [inner sep=0.7mm, circle, fill=black!100] {};
		\foreach \x in {1,3,5}
		\draw [very thick] (\x, 0) -- (\x, 2);
		\foreach \x in {1,3,5}
		\draw [very thick] (\x, 0) -- (\x+2, 2);
		\draw [very thick] (1,0) -- (5,0);
		\draw [very thick] (1,2) -- (7,2);
		
		\draw [loosely dotted, very thick] (7,2) -- (8,2);
		\draw [loosely dotted, very thick] (5,0) -- (6,0);	
		
		\node at (0, 1) [inner sep=0.7mm, circle, fill=black!100] {};
		\foreach \x in {1,3,5}
		\node (\x) at (\x + 1, 3) [inner sep=0.7mm, circle, fill=black!100] {};
		\foreach \x in {1,3,5}
		\draw [very thick] (\x, 2) -- (\x+1, 3) -- (\x +2, 2);
		\foreach \x in {1,3}
		\node (\x) at (\x + 1, -1) [inner sep=0.7mm, circle, fill=black!100] {};
		\foreach \x in {1,3}
		\draw [very thick] (\x, 0) -- (\x+1, -1) -- (\x +2, 0);
		
		\draw [very thick] (1,0) -- (0,1) -- (1,2);
		\node at (-0.6, 1) [] {$u_{k}$};
		\node at (1, -0.675) [] {$v_{k}$};
		\node at (3, -0.675) [] {$v_{k-1}$};
		\node at (5, -0.675) [] {$v_{k-2}$};
		\node at (2, -1.45) [] {$u_{k-1}$};
		\node at (4, -1.45) [] {$u_{k-2}$};
		\node at (1, 2.675) [] {$v_{1}$};
		\node at (3, 2.675) [] {$v_{2}$};
		\node at (5, 2.675) [] {$v_{3}$};
		\node at (7, 2.675) [] {$v_{4}$};
		\node at (2, 3.45) [] {$u_{1}$};
		\node at (4, 3.45) [] {$u_{2}$};
		\node at (6, 3.45) [] {$u_{3}$};
		\end{tikzpicture}
	\end{center}
	\caption{The canonical triangulation of a skeleton.}\label{can-tri}
\end{figure}

We shall use skeletons and their canonical triangulations to help simplify the bookkeeping when counting the number of copies of $C^r_k$ in an $r$-graph. We begin with the following simple observation.
\begin{proposition}\label{overcount}
An $r$-uniform linear $k$-cycle possesses exactly $2k(r-2)^k$ different skeletons. \qed
\end{proposition}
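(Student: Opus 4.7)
The plan is to count skeletons by fixing the three components $\mathbf{e}$, $\mathbf{v}$, $\mathbf{u}$ in an order that makes each choice essentially independent, and then multiplying the counts together.

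First, I would record two structural facts about the cycle $C^r_k$ itself that are immediate from the definition: each edge contains exactly two core vertices (the two it shares with its neighbours in the cyclic edge order) and therefore $r-2$ non-core vertices; and the non-core vertex sets of distinct edges are pairwise disjoint. The second fact will ensure that choosing one non-core vertex per edge automatically produces a tuple of $k$ distinct vertices, as required in the definition of a skeleton.

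The key observation is that the condition $v_i, v_{i+1} \in e_i$ (with $v_{k+1} = v_1$) forces $e_i$ to be the unique edge of $C^r_k$ whose two core vertices are $v_i$ and $v_{i+1}$. Consequently, $\mathbf{v}$ determines $\mathbf{e}$, and the admissible orderings $\mathbf{v}$ are precisely those orderings of the $k$ core vertices along the cyclic order that the cycle induces on them --- equivalently, the Hamiltonian traversals of the $k$-cycle formed by the core. There are exactly $2k$ such traversals, corresponding to the $k$ choices of starting vertex and the $2$ choices of direction.

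Once $\mathbf{e}$ (and hence $\mathbf{v}$) is fixed, the tuple $\mathbf{u}$ is chosen independently coordinate by coordinate: for each $1 \le i \le k$, the entry $u_i$ can be any of the $r-2$ non-core vertices of $e_i$, contributing a factor of $(r-2)^k$. Multiplying the two counts yields the claimed $2k(r-2)^k$ skeletons. I expect no real obstacle here; the only point requiring care is the bookkeeping that $\mathbf{v}$ uniquely determines $\mathbf{e}$ (so that one does not double-count by treating them as independent), and this is handled by the observation in the previous paragraph.
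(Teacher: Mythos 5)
Your proof is correct, and it is exactly the counting the paper leaves implicit (the proposition is stated with no proof): the $2k$ factor comes from the Hamiltonian traversals of the core cycle, which determine $\mathbf{e}$, and the $(r-2)^k$ factor from choosing one of the $r-2$ non-core vertices in each edge. Your care in noting that $\mathbf{v}$ determines $\mathbf{e}$ and that the non-core parts of distinct edges are disjoint is precisely the bookkeeping needed.
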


Next, we observe the following property of canonical triangulations.
\begin{proposition}\label{triorder}
For any proper subset $U$ of the support of a skeleton inducing at least one triangle in the corresponding canonical triangulation, there exists a vertex $x \notin U$ in the support with the property that there exist $x',y, z \in U$ such that the sets $\{x,y,z\}$ and $\{x',y,z\}$ both induce triangles in the canonical triangulation.
\end{proposition}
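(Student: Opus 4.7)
The plan is to exploit the fact that the canonical triangulation is a triangulation of a topological disk, so its face-adjacency dual graph is connected.

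First I would record the structure of the canonical triangulation suggested by Figure~\ref{can-tri}: the $2k$ vertices of the support $S$ lie on a cyclic boundary in the order $v_1, u_1, v_2, u_2, \dots, v_k, u_k$, and the triangulation decomposes into $k$ ``outer'' triangles $T_i^u = \{v_i, v_{i+1}, u_i\}$ glued onto the cycle edges, together with $k - 2$ ``inner'' triangles zigzag-triangulating the polygon $v_1 v_2 \cdots v_k$, for a total of $2k - 2$ triangles. Every vertex of $S$ belongs to at least one triangle; in particular each $u_i$ lies only in $T_i^u$, while each $v_i$ lies in $T_{i-1}^u$.

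Let $D$ be the dual graph whose vertex set is this collection of $2k - 2$ triangles, with two triangles declared adjacent when they share an edge of the canonical triangulation. A count via Euler's formula (or a straightforward induction on the size of the triangulation) shows that $D$ has $2k - 3$ edges and is connected, so $D$ is a tree.

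Now given a proper subset $U \subsetneq S$ such that $G[U]$ contains at least one triangle of the canonical triangulation, let $\mathcal{T}_U$ denote the family of triangles whose vertex set is entirely contained in $U$. By hypothesis $\mathcal{T}_U \neq \emptyset$, and since $S \setminus U$ is nonempty and every vertex of $S$ lies in some triangle, $\mathcal{T}_U$ is also a proper subfamily of the full triangle set. The connectivity of $D$ then yields a pair of adjacent triangles $T_1 \in \mathcal{T}_U$ and $T_2 \notin \mathcal{T}_U$ sharing an edge $\{y, z\}$; writing $T_1 = \{x', y, z\}$ and $T_2 = \{x, y, z\}$, we immediately obtain $x', y, z \in U$ while $x \notin U$ (because $\{y, z\} \subset U$ but $T_2 \not\subset U$), which is exactly what is required. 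The only substantive step is the structural claim that $D$ is connected, which is really just the topological observation that the canonical triangulation fills a disk; everything else is bookkeeping about which triangles sit inside $U$.
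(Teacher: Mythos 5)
Your proof is correct. It rests on the same structural fact as the paper's argument --- that the canonical triangulation of the $2k$-gon $v_1u_1v_2u_2\cdots v_ku_k$ is ``connected through shared edges'' --- but you package it differently. The paper's proof fixes a triangle $\{p,q,r\}$ contained in $U$ and invokes an explicit shelling order $(x_1,\dots,x_{2k-3})$ of the remaining support vertices in which each $x_i$ completes a triangle edge-adjacent to one already built; one then takes the minimal $i$ with $x_i\notin U$. You instead pass to the face-adjacency dual $D$, note that a triangulation of a disk has connected (tree) dual, and take a dual edge crossing the cut between the triangles inside $U$ and those not inside $U$. The two are essentially equivalent (a shelling order is just a traversal of the dual tree), but your cut formulation is arguably cleaner: it avoids constructing an ordering for every possible starting triangle, at the cost of needing the extra --- and easily checked --- observation that every support vertex lies in some triangle, so that $\mathcal{T}_U$ is a proper subfamily. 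Your level of detail on the one substantive point (connectivity of $D$) matches the paper's, which likewise asserts its shelling claim as a ``simple observation.''
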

\begin{proof}
The claim follows from the simple observation that for each triangle $\{p,q,r\}$ of the canonical triangulation, there exists an ordering $(x_1, x_2, \dots, x_{2k-3})$ of the remaining $2k-3$ vertices of the support such that for each $i \in [2k-3]$, there exist $x'_i, y_i, z_i \in \{p, q, r, x_1, x_2, \dots, x_{i-1}\}$ such that the sets $\{x_i ,y_i ,z_i\}$ and $\{x'_i ,y_i ,z_i\}$ both induce triangles in the canonical triangulation.
\end{proof}
Finally, to prove Theorem~\ref{enumthm}, we shall need the existence of graphs satisfying a very mild regularity condition; in particular, we shall make use of the following simple fact.

\begin{proposition}\label{almost-reg}
For each $t \in \N$ and each $n \ge t$, there exists an $n$-vertex graph with the property that each vertex of the graph has degree either $t$ or $t-1$. \qed
\end{proposition}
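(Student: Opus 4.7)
The plan is to construct such a graph explicitly by means of a circulant construction, treating cases according to the parity of $t$ and $n$. By the handshake lemma, a graph with $k$ vertices of degree $t-1$ and $n-k$ of degree $t$ requires $nt - k$ to be even, so $k \equiv nt \pmod{2}$; we aim to realise the corresponding degree sequence with $k \in \{0, 1\}$.

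If $n = t$, the complete graph $K_n$ is $(t-1)$-regular and we are done. Otherwise $n > t$; label the vertices by $\mathbb{Z}/n\mathbb{Z}$ and let $\mathrm{dist}(i,j) = \min(|i-j|, n-|i-j|)$ denote the cyclic distance. When $t$ is even, take the circulant graph in which $i \sim j$ whenever $\mathrm{dist}(i,j) \le t/2$; this graph is $t$-regular, as required. When $t$ is odd, start with the circulant $G_0$ defined by $\mathrm{dist}(i,j) \le (t-1)/2$, which is $(t-1)$-regular, and augment it with the edge set
\[ M = \bigl\{ \{i, i + \lfloor n/2 \rfloor \} : 0 \le i < \lfloor n/2 \rfloor \bigr\}, \]
where the second coordinate is reduced modulo $n$. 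The edges of $M$ have cyclic distance $\lfloor n/2 \rfloor > (t-1)/2$ (since $n > t$), so they are disjoint from $E(G_0)$, and a direct inspection of the indices shows that $M$ is a matching: a perfect matching when $n$ is even, and a near-perfect matching covering every vertex except $n-1$ when $n$ is odd. Consequently $G_0 \cup M$ is $t$-regular when $n$ is even, and when $n$ is odd it has exactly one vertex (namely $n-1$) of degree $t-1$ with every other vertex of degree $t$.

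The only subtle point, and hence the main obstacle to watch for, is the simultaneous verification that the augmenting set $M$ is both a matching and disjoint from the edges already present in $G_0$; both of these reduce to the single inequality $\lfloor n/2 \rfloor > (t-1)/2$, which is immediate from the hypothesis $n > t$. Together with the $n = t$ base case handled by $K_n$, this covers every case and yields the desired graph.
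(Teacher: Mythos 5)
Your construction is correct. The paper states this proposition without proof (it is flagged as routine), so there is no argument to compare against; your circulant construction is a clean explicit realisation. All the cases check out: for $n=t$ the complete graph works; for $n>t$ with $t$ even the circulant of radius $t/2$ is $t$-regular since $t/2<n/2$; and for $t$ odd your set $M$ is indeed a (near-)perfect matching at cyclic distance $\lfloor n/2\rfloor>(t-1)/2$, so it is edge-disjoint from $G_0$ and raises every covered vertex's degree from $t-1$ to $t$, leaving at most one vertex (when $n$ is odd, forced by parity) at degree $t-1$.
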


A few more remarks about notation are in order before we proceed. We shall make use of standard asymptotic notation; in the sequel, constants suppressed by the asymptotic notation are allowed to depend on the fixed parameters $r$ and $k$. For the sake of clarity of presentation, we systematically omit floor and ceiling signs whenever they are not crucial.

\section{Balanced supersaturation}\label{s:supersat}
The purpose of this section is to prove the following balanced supersaturation result for linear cycles which asserts that in any $r$-graph on $[n]$ with significantly more than $\ex_r (n, C^r_k)$ edges, one can find many copies of $C^r_k$ that are additionally `well-distributed' across the $r$-graph in question.

\begin{theorem}\label{t:supersat}
For every pair of integers $r,k \ge 3$, there exists $K = K(r,k) >0$ such that the following holds for all $n \in \N$. Given an $r$-graph $G$ on $[n]$ with $|E(G)| = tn^{r-1}$ for some $t \ge 2k(r-1) $, there exists a $k$-graph $\CH$ on $E(G)$, where each edge of $\CH$ is a copy of $C^r_k$ in $G$, such that
\begin{enumerate}
\item $d(\CH) \ge K^{-1} t^{k-2}D^{k-1}$, and
\item $\Delta_j (\CH) \le K t^{k-j-1} D^{k-j}$ for each $1 \le j \le k-1$,
\end{enumerate}
where $D = t^2 n^{r-4}$ if $r \ge 4$ and $D = t$ if $r=3$.
\end{theorem}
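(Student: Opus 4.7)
My strategy is to construct $\CH$ as the family of all copies of $C^r_k$ living inside a suitably cleaned-up subgraph $G' \subseteq G$, and to control both the average degree and the maximum $j$-degrees by direct skeleton counting through Propositions~\ref{overcount} and~\ref{triorder}. The first step is a standard peeling: I iteratively discard any edge of $G$ that witnesses some subset of $V(G)$ of positive but too-small codegree. Calibrating thresholds so that all nonzero $(r-1)$-degrees are $\Omega(t)$ (and, when $r \ge 4$, so that suitable smaller codegrees are all $\Omega(D)$) costs at most $|E(G)|/2$ edges because the total codegree is $\Theta(t n^{r-1})$; let $G'$ denote the remaining subgraph.

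\emph{Lower bound on $d(\CH)$.} By Proposition~\ref{overcount}, the number of copies of $C^r_k$ in $G'$ is, up to the fixed factor $2k(r-2)^k$, the number of skeletons. I would count skeletons by selecting support vertices in the order supplied by Proposition~\ref{triorder}: first choose an ordered triple inside some edge $e \in E(G')$ for the initial triangle $\{x_1,x_2,x_3\}$, and then iteratively add the remaining $2k-3$ support vertices, each closing a triangle in the canonical triangulation with two already-chosen vertices. Cleaning ensures that each extension admits at least $\Omega(t)$ valid choices when the new triangle is one of the $k$ cycle triangles (so that the choice is pinned to an edge of $G'$) and at least $\Omega(D)$ choices when it is merely a filler triangle (so the choice is controlled by a codegree in $G'$). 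The telescoping product yields at least $K^{-1} t^{k-1} D^{k-1} n^{r-1}$ copies of $C^r_k$ in $G'$, equivalently $d(\CH) \ge K^{-1} t^{k-2} D^{k-1}$.

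\emph{Upper bound on $\Delta_j(\CH)$.} For any $j$-subset $\{f_1,\dots,f_j\}$ of $E(G')$, there are at most $O_{k,r}(1)$ cyclic placements of these edges as prescribed cycle edges, and for each placement I count extensions to a full skeleton using Proposition~\ref{triorder}. Applying now the trivial upper bounds $\Delta_s(G) \le n^{r-s}$ together with the fact that cycle-triangle extensions contribute at most $O(t)$ and filler-triangle extensions contribute at most $O(D)$, summing over placements gives $\Delta_j(\CH) \le K t^{k-j-1} D^{k-j}$.

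\emph{Main obstacle.} The most delicate point is the $\Delta_j$-bound, where one has to apply Proposition~\ref{triorder} so that cycle-triangle extensions (each costing $O(t)$) are interleaved with filler extensions (each costing $O(D)$) in such a way that the product precisely matches $t^{k-j-1} D^{k-j}$ for every one of the $O_{k,r}(1)$ possible cyclic placements of the $f_i$. This balance also forces the definition of $D$; the case split between $r=3$ and $r\ge 4$ reflects that for $r=3$ each cycle triangle fully determines an edge of $G$, whereas for $r\ge 4$ each cycle edge carries $r-3$ additional non-support vertices and thus contributes the extra $n^{r-4}$ factor appearing in $D$.
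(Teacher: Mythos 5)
There is a genuine gap, and it sits exactly where balanced supersaturation differs from plain supersaturation. You propose to take $\CH$ to be \emph{all} copies of $C^r_k$ in the cleaned graph $G'$ and to deduce the $\Delta_j$ bounds from the cleaning. But cleaning only yields \emph{lower} bounds on codegrees; it gives no upper bound on the number of extensions, so the family of all copies need not be balanced. Concretely, for $r=k=3$ let half of $G$ be a complete $3$-graph on a set $W$ of $w \approx (tn^2)^{1/3}$ vertices: all pair-degrees inside $W$ are about $w \gg t$, so this part survives any reasonable peeling, yet each of its edges lies in roughly $w^3 = tn^2$ loose triangles, far exceeding the required $\Delta_1 = O(t^3)$ when $n \gg t$. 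Your claim in the $\Delta_j$ argument that "cycle-triangle extensions contribute at most $O(t)$ and filler-triangle extensions contribute at most $O(D)$" is therefore unjustified: given $y,z$ already placed, the number of $x$ with $\{x,y,z\}$ of positive codegree in $G'$ can be as large as $n$. This is the step that would fail, and no reordering via Proposition~\ref{triorder} repairs it.

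The paper's proof closes exactly this hole by \emph{selecting} a sub-family of copies rather than taking all of them. After cleaning (with thresholds $D_2 = tn^{r-3}$ on pairs and $D_3 = tn^{r-4}$ on triples — note your $(r-1)$-set and $\Omega(D)$ thresholds are also miscalibrated), it fixes for each surviving pair $\pi$ a nearly $t$-regular auxiliary graph $\Gamma(\pi)$ on the triples containing $\pi$ (via Proposition~\ref{almost-reg}), and for each surviving triple $\sigma$ a set $\Lambda(\sigma) \subseteq N_F(\sigma)$ of size exactly $D_3$. Skeletons are then generated by an algorithm that chooses every new support vertex as a $\Gamma(\pi)$-neighbour of an already-built triple and every new edge from $\Lambda(\sigma)$; the resulting "adjacency property" is what caps each extension at $O(t)$ (max degree of $\Gamma$) and each edge completion at $D_3$, yielding $\Delta_j(\CH) = O(t^{2k-2j-1} D_3^{k-j}) = O(t^{k-j-1} D^{k-j})$, while the lower bound comes from $t/3$ choices per support vertex and $D_3/2$ per edge. (Your lower-bound accounting also has the roles reversed: it is the cycle-triangle steps, which add $u_i$ \emph{and} the remaining $r-3$ vertices of $e_i$, that contribute $\Omega(tD_3) = \Omega(D)$, while the filler steps adding a core vertex contribute $\Omega(t)$.) You would need to incorporate a regularization device of this kind — the point is not bookkeeping over placements but obtaining any upper bound on extension counts at all.
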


\begin{proof}
We start by setting $D_2 = tn^{r-3}$ and $D_3 = D/t$; note that since $D = t^2 n^{r-4}$ if $r \ge 4$ and $D = t$ if $r=3$, we have $D_3 = tn^{r-4}$ if $r \ge 4$ and $D_3 = 1$ if $r = 3$.

We shall construct three hypergraphs on $[n]$ from $G$: an $r$-graph $F$ which is a subgraph of $G$, a $3$-graph $A$ and a graph $B$. First, we obtain $F$ from $G$ by repeatedly applying, until it is no longer possible to do so, the following deletion rule: if there exists either a $3$-set $\sigma \subset [n]^{(3)}$ with $0 < \Deg_G(\sigma) < D_3$ or a $2$-set $\pi \subset [n]^{(2)}$ with $0 < \Deg_G(\pi) < D_2$, delete every edge of $G$ containing the corresponding set. We then define $A$ to be the $3$-graph whose edge set consists of those triples $\sigma \in [n]^{(3)}$ such that $\Deg_F(\sigma) > 0$, and analogously define $B$ to be the graph whose edge set consists of those pairs $\pi \in [n]^{(2)}$ such that $\Deg_F(\pi) > 0$. 

We observe that $F$, $A$ and $B$ have the following properties by virtue of how they are constructed.
\begin{enumerate}[(A)]
\item\label{F-deg} First, $\Deg_F(\sigma) \ge D_3$ for each $\sigma \in E(A)$ and $\Deg_F(\sigma) = 0$ for all $\sigma \in [n]^{(3)} \setminus E(A)$, and analogously, $\Deg_F(\pi) \ge D_2$ for each $\pi \in E(B)$ and  $\Deg_F(\pi) = 0$ for all $\pi \in [n]^{(2)} \setminus E(B)$
\item\label{BtoA} Next, we have $\Deg_A(\pi) \ge t$ for each $\pi \in E(B)$. Indeed, if this fails to hold for some $\pi \in E(B)$, then \[\Deg_F(\pi) \le \Deg_A(\pi) n^{r-3} < t n^{r-3} = D_2 ,\] a contradiction.
\item\label{eF} Finally, we have $|E(F)| \ge |E(G)|/3$. Indeed, the number of edges deleted from $G$ to obtain $F$ is at most 
\[ \binom{n}{2} D_2 \le tn^{r-1}/2 \]
in the case where $r=3$, and at most
\[\binom{n}{2}D_2 + \binom{n}{3}D_3 \le tn^{r-1}/2 + tn^{r-1}/6 = 2tn^{r-1}/3.\]
in the case where $r \ge 4$.

\end{enumerate}

We shall construct $\CH$ from the copies of $C^r_k$ in $F$ using $A$ and $B$; to do so it will be helpful to define some auxiliary structures. First, we define a collection of graphs on $E(A)$, one for each $\pi \in E(B)$, as follows. From~\eqref{BtoA}, we know that $\Deg_A(\pi) \ge t$ for each $\pi \in E(B)$, so we may appeal to Proposition~\ref{almost-reg} and fix a graph $\Gamma(\pi)$ on $N_A (\pi)$ with the property that each $\sigma \in N_A(\pi)$ has degree either $t$ or $t-1$ in $\Gamma(\pi)$. Next, let $\Gamma$ be the graph on $E(A)$ whose edge set is the union of the edge sets of the graphs $\Gamma(\pi)$ over $\pi \in E(B)$; since $\sigma \in E(A)$ has positive degree in $\Gamma(\pi)$ if and only if $\pi \subset \sigma$, it follows that the degree of each $\sigma \in E(A)$ in $\Gamma$ is at most $3t$. Finally, we know from~\eqref{F-deg} that $\Deg_F(\sigma) \ge D_3$ for each $\sigma \in E(A)$; we may therefore fix a subset $\Lambda(\sigma) \subset N_F(\sigma)$ of size $D_3$ for each $\sigma \in E(A)$.

We shall construct $\CH$ by specifying a skeleton (though possibly more than one) for each copy of $C^r_k$ in $F$ that we wish to include in $\CH$. Furthermore, we shall guarantee that the support of each skeleton that we specify has the following \emph{adjacency property}: if $\{x,y,z_1\}$ and $\{x,y,z_2\}$ are two subsets of the support that induce triangles in the corresponding canonical triangulation, then $\{x,y,z_1\}$ and $\{x, y, z_2\}$ are both edges of $A$ that are adjacent in the graph $\Gamma(\{x,y\})$. 

We now describe an algorithm to construct skeletons of copies of $C^r_k$ in $F$ whose supports additionally satisfy the adjacency property; recall that specifying a skeleton of a copy of $C^r_k$ in $F$ involves specifying a triple $(\mathbf{e}, \mathbf{v}, \mathbf{u})$, where $\mathbf{e} = (e_1, e_2, \dots, e_k)$ is an ordering of the edges of the cycle, $\mathbf{v} = (v_1, v_2, \dots, v_k)$ is an ordering of the core of the cycle, and  $\mathbf{u} = (u_1, u_2, \dots, u_k)$ is a choice of $k$ non-core vertices of the cycle.
\begin{enumerate}[(i)]
\item\label{st1} We start by choosing an edge $e$ of $F$, in $|E(F)|$ ways, and setting $e_k = e$. We then choose three distinct vertices from $e$ and designate these vertices to be $v_1$, $v_k$ and $u_k$ in some order.
\item\label{st2} Next, we specify $v_2, v_3, \dots, v_{k-1}$ inductively as follows. Having specified $v_1, v_2, \dots, v_i$ and $v_k, v_{k-1}, \dots v_{k-i+1}, u_{k-i+1}$, we first fix $v_{i+1}$ as follows. Consider the triple $\sigma = \{v_{i}, v_{k-i+1}, v\}$, where $v = u_k$ if $i =1$ and $v = v_{k-i + 2}$ if $i \ge 2$, and the pair $\pi = \{v_{i}, v_{k-i+1}\}$. Let $\sigma' = \{v_{i}, v_{k-i+1}, v'\}$ be a neighbour of $\sigma$ in the graph $\Gamma(\pi)$ chosen in such a way that $v'$ is distinct from all the already specified vertices of the support and so that $v' \notin e_k$. We then set $v_{i+1} = v'$, noting that since $\sigma$ has at least $t-1$ neighbours in $\Gamma(\pi)$, there are at least $t - 1 -k(r-1) \ge t/3$ choices for $v_{i+1}$. If $k$ is odd and it so happens that $k-i = i+1$, then we stop after fixing $v_{i+1}$. If not, then we pick $v_{k-i}$ in a manner analogous to how we chose $v_{i+1}$, working instead with the triple $\sigma = \{v_{i+1}, v_{k-i + 1}, v_i\}$ and the pair $\pi = \{v_{i+1}, v_{k-i + 1}\}$. If $k$ is even and it so happens that $k - i = i + 2$, then we stop after fixing $v_{k-i}$. Observe that these choices ensure that the subset of the support specified so far satisfies the adjacency property.
\item\label{st3} Now, we finish specifying the support of the skeleton by inductively choosing $u_1, u_2, \dots, u_{k-1}$ as follows. For $i \in [k-1]$, having specified, $u_1, u_2, \dots, u_{i-1}$, we fix $u_{i}$ as follows. Note that there exists a unique vertex $v \in \{v_1, v_2, \dots, v_k\}$ such that the triple $\{v_i, v_{i+1}, v\}$ induces a triangle in the canonical triangulation; let $\sigma = \{v_i, v_{i+1}, v\}$ and $\pi = \{v_i, v_{i+1}\}$. Let $\sigma' = \{v_{i}, v_{i+1}, u\}$ be a neighbour of $\sigma$ in the graph $\Gamma(\pi)$ chosen in such a way that $u$ is distinct from all the already specified vertices of the support and so that $u \notin e_k$. We then set $u_i = u$, noting that since $\sigma$ has at least $t-1$ neighbours in $\Gamma(\pi)$, there are again at least $t - 1 -k(r-1) \ge t/3$ choices for $u_i$. Again, note that these choices ensure that the support satisfies the adjacency property.
\item\label{st4} Finally, we finish specifying the skeleton by fixing $e_1, e_2, \dots, e_{k-1}$ inductively as follows. For $i \in [k-1]$, having specified, $e_1, e_2, \dots, e_{i-1}$, we fix $e_{i}$ as follows. Let $\sigma = \{v_i, v_{i+1}, u_i\}$ and consider the set $\Lambda(\sigma) \subset N_F(\sigma)$ of $D_3$ edges in $F$ containing $\sigma$. Choose an edge $e \in \Lambda(\sigma)$ with the property that $e \setminus \sigma$ is disjoint both from $e_k \cup e_1 \cup e_2 \cup \dots \cup e_{i-1}$ and from the support of the skeleton. We then set $e_i = e$, noting that the number of choices for $e_i$ is at least
\[ D_3 - k(r-1) n^{r-4} = tn^{r-4} - k(r-1) n^{r-4} \ge tn^{r-4}/2 = D_3/2\]
in the case where $r\ge 4$, and trivially at least $1 \ge D_3/2$ in the case where $r = 3$ (since specifying the support specifies the entire skeleton when $r=3$).
\end{enumerate}

We define $\CH$ by setting $V(\CH) = E(G)$ and including a copy of $C^r_k$ in $F$ in $E(\CH)$ if and only if at least one skeleton of the cycle in question is generated by the above algorithm. 

We now show that $\CH$ satisfies the requisite degree conditions. We remind the reader that constants suppressed by the asymptotic notation in what follows are allowed to depend on the fixed parameters $r$ and $k$. We start by bounding the average degree of $\CH$ from below.

\begin{claim}\label{cl1}$d(\CH) = \Omega( t^{k-2}D^{k-1})$.
\end{claim}
\begin{proof}
By Proposition~\ref{overcount}, a fixed copy of $C^r_k$ in $F$ possesses $2k(r-2)^k = O(1)$ distinct skeletons; therefore, it suffices to show that the number of distinct skeletons generated by the algorithm described above is $\Omega(|E(G)| t^{k-2}D^{k-1})$. To do this, we count the number of distinct choices available to us at each stage in the above algorithm. Indeed, in the first stage~\eqref{st1}, we have at least $|E(F)|$ distinct choices, in the second stage~\eqref{st2}, we have at least $(t/3)^{k-2}$ distinct choices, in the third stage~\eqref{st3}, we have at least $(t/3)^{k-1}$ distinct choices, and in the final stage~\eqref{st4}, we have at least $(D_3/2)^{k-1}$ distinct choices. We know from~\eqref{eF} that $|E(F)| \ge |E(G)|/3$, so we have
\[ |E(\CH)| = \Omega\left(|E(F)| t^{k-2} (tD_3)^{k-1}\right) = \Omega\left(|E(G)| t^{k-2} D^{k-1}\right);\]
the claim follows since
\[d(\CH) = k|E(\CH)|/|E(G)| = \Omega\left( t^{k-2}D^{k-1}\right). \qedhere\]

\end{proof}

To finish the proof, we bound the maximum degrees of $\CH$ from above.

\begin{claim}\label{cl2}  For each $j \in[k-1]$, we have $\Delta_j (\CH) = O(t^{k-j-1} D^{k-j})$.
\end{claim}
\begin{proof}
Fix $j \in [k-1]$ and a set $S \subset E(G)$ of size $j$. Our aim is to show that $\Deg_\CH (S) = O(t^{k-j-1} D^{k-j})$; to do this, we shall bound from above the number of distinct skeletons $(\mathbf{e}, \mathbf{v}, \mathbf{u})$ generated by our algorithm that contain $S$, i.e., with the property that $S \subset \{e_1, e_2, \dots, e_k\}$. Observe that we may assume that $S \subset E(F)$, for if not, the number of such skeletons, and consequently $\Deg_\CH(S)$, is zero.

First, we fix which edges of the skeleton correspond to which edges in $S$; this may be done in at most $k^j = O(1)$ ways. Next, for each edge in $S$, we identify the two core vertices and one non-core vertex from that edge that belong to the support of the skeleton and note that this may be done in at most $jk^3 = O(1)$ ways; let $X$ denote the subset of the support contained in some edge in $S$ and note that since two edges of an $r$-uniform linear $k$-cycle intersect in at most one core vertex, and since each core vertex belongs to precisely two edges, we must have $|X| \ge 2j+1$.

We first estimate the number of ways of choosing, from $[n]$, the vertices in the support of the skeleton not in $X$. By repeatedly applying Proposition~\ref{triorder}, we know that it is possible to find an ordering $(x_1, x_2, \dots, x_m)$ of the $m \le 2k - (2j+1)$ vertices of the support not in $X$ with the property that each for each $x_i$, there exist $x'_i, y_i, z_i \in X \cup \{x_1, x_2, \dots x_{i-1}\}$ such that the sets $\{x_i ,y_i ,z_i\}$ and $\{x'_i ,y_i ,z_i\}$ both induce triangles in the canonical triangulation. We now count the number of ways to choose, for $1 \le i \le m$, the vertex $x_i$ from $[n]$. Having picked $x_1, x_2, \dots, x_{i-1}$, we know from the previous observation, and from the adjacency property of the support guaranteed by our algorithm, that there exist two triples $\sigma, \sigma' \in E(A)$ that are adjacent in $\Gamma$ such that $x_i \in \sigma$ and $\sigma' \subset X \cup \{x_1, x_2, \dots x_{i-1}\}$. Consequently, the number of choices for $x_i$ is at most the number of ways of choosing $\sigma'$ multiplied by the maximum degree of $\Gamma$, which is at most $(2k)^3 3t = O(t)$. It follows that the number of ways of choosing the support of a skeleton containing $S$ is $O(t^{2k - (2j+1)})$. 

Next, having fixed the support of a skeleton containing $S$, we need to choose the $k-j$ remaining edges of the skeleton. The number of ways of doing this is  easily seen to be at most $D_3 ^{k-j}$. 

Finally, combining the above estimates, we see that
\[\Deg_\CH (S) = O\left(t^{2k - (2j+1)}D_3 ^{k-j} \right) = O\left(t^{k-j-1} D^{k-j}\right );\]
this establishes the claim.
\end{proof}

The result now follows from Claims~\ref{cl1} and~\ref{cl2} by choosing $K = K(r,k)$ to be suitably large.
\end{proof}

\section{Proof of the main result}\label{s:proof}
We now show how to deduce Theorem~\ref{enumthm} from Theorem~\ref{t:supersat} using Theorem~\ref{thm-container}. We shall establish our main result through iterated applications of the following proposition.

\begin{proposition}\label{itercont}
For every pair of integers $r,k \ge 3$, there exists $L = L(r,k) >0$ such that the following holds for all $n \in \N$. Given an $r$-graph $G$ on $[n]$ with $|E(G)| = tn^{r-1}$ for some $t \ge L$, there exists a collection $\CC(G)$ of at most
\begin{equation}\label{contbound} 
\exp\left(\frac{L n^{r-1}}{t^{1/4}}\right)
\end{equation}
subgraphs of $G$ such that
\begin{enumerate}
\item each $C^r_k$-free subgraph $F$ of $G$ is a subgraph of some $H \in \CC(G)$, and
\item $|E(H)| \le (1 - 1/L) |E(G)|$ for each $H \in \CC(G)$.
\end{enumerate}
\end{proposition}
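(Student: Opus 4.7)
The plan is to combine Theorem~\ref{t:supersat} with Theorem~\ref{thm-container}. Starting from $G$, I would first invoke Theorem~\ref{t:supersat} to obtain a $k$-graph $\CH$ with vertex set $V(\CH) = E(G)$ whose edges are copies of $C^r_k$ in $G$, satisfying the stated degree properties. Since any $C^r_k$-free subgraph $F \subset G$ has edge set $E(F)$ independent in $\CH$, I would then apply Theorem~\ref{thm-container} to $\CH$ to obtain a family of containers $\CC'$, and take $\CC(G)$ to be the collection of subgraphs of $G$ whose edge sets are the containers in $\CC'$. Property~(1) is immediate from conclusion~(i) of Theorem~\ref{thm-container}.

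For the application of Theorem~\ref{thm-container} to $\CH$, I would fix $\eps = 1/2$ and set $p = C'/(t^{(k-2)/(k-1)} D)$ with the constant $C' = C'(r,k)$ chosen large enough to force $\delta(\CH, p) \le c_2/2$. Using the bounds $\Delta_j(\CH) \le K t^{k-j-1} D^{k-j}$ for $j \le k-1$, the trivial $\Delta_k(\CH) \le 1$, and $d(\CH) \ge K^{-1} t^{k-2} D^{k-1}$, a direct calculation yields
\[
\delta(\CH, p) \le K^2 \sum_{j=2}^{k-1} (tDp)^{-(j-1)} + Kt (tDp)^{-(k-1)},
\]
and both terms are controlled once $tDp = C' t^{1/(k-1)}$ is sufficiently large.

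The main step is establishing property~(2). Suppose for contradiction that some container $U \in \CC'$ satisfies $|U| > (1 - 1/L)|E(G)|$, so that $|V(\CH) \setminus U| \le tn^{r-1}/L$. Each vertex of $\CH$ lies in at most $\Delta_1(\CH) \le K t^{k-2} D^{k-1}$ edges of $\CH$, so at most $K t^{k-1} D^{k-1} n^{r-1}/L$ edges of $\CH$ meet $V(\CH) \setminus U$. Comparing this with $|E(\CH)| = d(\CH) \cdot tn^{r-1}/k \ge K^{-1} t^{k-1} D^{k-1} n^{r-1}/k$ shows that $|E(\CH[U])| \ge (1 - kK^2/L)|E(\CH)|$; taking $L \ge 4kK^2$ forces $|E(\CH[U])| \ge (3/4)|E(\CH)|$, contradicting the container property $|E(\CH[U])| \le \eps|E(\CH)| = |E(\CH)|/2$. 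Hence $|E(H_U)| = |U| \le (1 - 1/L)|E(G)|$ for every container, which is exactly property~(2).

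Finally, I would verify that $|\CC(G)| = |\CC'|$ fits~\eqref{contbound}. The quantity $Np$ works out to $O(n^2/t^{(k-2)/(k-1)})$ for $r = 3$ and $O(n^3/t^{(2k-3)/(k-1)})$ for $r \ge 4$, while $\log(1/p) = O(\log t)$ when $r \in \{3,4\}$ and $O(\log n)$ when $r \ge 5$. A short case analysis then shows that $Np \log(1/p) = O(n^{r-1}/t^{1/4})$ in every regime: for $r \in \{3,4\}$ the exponent of $t$ in $Np$ is at least $1/2 > 1/4$, absorbing the $\log t$ factor; for $r \ge 5$ the surplus factor $n^{r-4} \ge n$ absorbs the $\log n$ factor. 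The main technical hurdle is the interplay between the parameters $p$ and $\eps$ and the supersaturation constants: one must simultaneously secure the constant-fraction gain underlying property~(2), a valid bound on $\delta(\CH, p)$, and a container count matching~\eqref{contbound} — each of which hinges on the sharp quantitative form of Theorem~\ref{t:supersat}.
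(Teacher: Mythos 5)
Your proposal is correct and follows essentially the same route as the paper: the same choice of $p \asymp (t^{(k-2)/(k-1)}D)^{-1}$, the same verification of the co-degree condition, the same $\Delta_1$-based argument that a container missing fewer than $|E(G)|/L$ edges of $G$ retains more than an $\eps$-fraction of the copies of $C^r_k$, and the same case analysis (in fact slightly more carefully spelled out than in the paper) showing $Np\log(1/p) = O(n^{r-1}/t^{1/4})$. The only differences are cosmetic choices of constants, e.g.\ $\eps = 1/2$ versus the paper's $\eps = 1/4$.
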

\begin{proof}
Let $K = K(r,k)$ be as promised by Theorem~\ref{t:supersat}, and let $c_1 = c_1 (k)$, $c_2 = c_2(k)$ and $c_3 = c_3(k)$ be as promised by Theorem~\ref{thm-container}. Also, as in Theorem~\ref{t:supersat}, we set $D = t^2 n^{r-4}$ if $r \ge 4$ and $D = t$ if $r=3$, noting that this ensures that $D \ge t$ for all $r \ge 3$.

Now, fix $\eps = 1/4$ and $p = LD^{-1}t^{-(k-2)/(k-1)}$, where we define $L$, with the benefit of hindsight, by
\[ L = L(r,k) = 10 + \max \left\{2k(r-1), \frac{4(K + kK^2)}{c_2}, (4c_3)^{12}, 2kK^2\right\}.
\]

Since $t \ge L \ge 2k(r-1)$, we may apply Theorem~\ref{t:supersat} to $G$ to get a $k$-graph $\CH$ on $E(G)$, where each edge of $\CH$ is a copy of $C^r_k$ in $G$, such that
\begin{enumerate}
\item $d(\CH) \ge K^{-1} t^{k-2}D^{k-1}$, and
\item $\Delta_j (\CH) \le K t^{k-j-1} D^{k-j}$ for each $1 \le j \le k-1$.
\end{enumerate}
First, note that if $F$ is a $C^r_k$-free subgraph of $G$, then $F$ is an independent set in $\CH$. Next, from the above bounds and the fact that  $L \ge 4(K+kK^2)/ c_2$, it is easily verified that the co-degree function $\delta(\CH, p)$ of $\CH$ satisfies
\begin{align*}
\delta(\CH, p) &= \frac{1}{d(\CH)} \sum_{j=2}^k \frac{\Delta_j(\CH)}{p^{j-1}} \le \frac{K}{t^{k-2}D^{k-1}} \left(\frac{1}{p^{k-1}}  + \sum_{j=2}^{k-1}\frac{ K t^{k-j-1} D^{k-j}}{p^{j-1} } \right)\\
&= \frac{K}{L^{k-1}} + \sum_{j = 2}^{k-1} \frac{K^2}{L^{j-1}t^{(j-1)/(k-1)}} \le \frac{K}{L^{k-1}} + \frac{kK^2}{L} \le \frac{K + kK^2}{L} \le c_2 \eps.
\end{align*}
We may therefore apply Theorem~\ref{thm-container} to the $k$-graph $\CH$ to get a collection $\CC$ of at most
\[ \exp \left(c_3 |E(G)| p \log(1/p) \log(1/\eps)\right) \]
subgraphs of $G$ such that
\begin{enumerate}
\item each $C^r_k$-free subgraph $F$ of $G$ is a subgraph of some $H \in \CC$, and
\item $|E(\CH[H])| \le \eps |E(\CH)|$ for each $H \in \CC$.
\end{enumerate}

To finish the proof, we proceed as follows. First, since $D\ge t \ge L \ge (4c_3)^{12}$ and $t^{(k-2)/(k-1)} \ge t^{1/3}$ for all $k \ge 3$, it plainly follows that
\begin{align*}
c_3 |E(G)| p \log(1/p) \log(1/\eps) &\le 2c_3 \left(\frac{Ltn^{r-1} \log \left(Dt^{(k-2)/(k-1)}\right)} {Dt^{(k-2)/(k-1)}} \right)\\
&\le 4c_3 \left(\frac{Ln^{r-1} \log t} {t^{(k-2)/(k-1)}} \right)\le \frac{Ln^{r-1}}{t^{1/4}}.
\end{align*}
Therefore, we have $|\CC| \le \exp(Ln^{r-1}/t^{1/4})$. Next, we know that each $H \in \CC$ satisfies $|E(\CH[H])| \le \eps |E(\CH)|$; we claim that this implies that $|E(H)| \le (1-1/L)|E(G)|$. To see this, note that 
\[|E(\CH[H])| \ge |E(\CH)| - \Delta_1(\CH) (|E(G)| - |E(H)|),\] 
so if $|E(H)| > (1- 1/L)|E(G)|$, then since $L \ge 2kK^2$ and $\Delta_1(\CH) \le K^2 d(\CH)$, we have
\[ |E(\CH[H])| > \frac{|E(G)|d(\CH)}{k} - \frac{K^2 |E(G)|d(\CH)}{L} \ge \frac{|E(G)| d(\CH)}{4k} = \eps |E(\CH)|,\]
which is a contradiction. It follows that $\CC = \CC(G)$ is indeed the required collection of subgraphs of $G$.
\end{proof}

We are now in a position to prove Theorem~\ref{enumthm}.
\begin{proof}[Proof of Theorem~\ref{enumthm}]
Let $L = L(r,k)$ be as promised by Proposition~\ref{itercont}. We wish to estimate the number of $C^r_k$-free $r$-graphs on $[n]$; equivalently, we wish to estimate the number of $C^r_k$-free subgraphs of the complete $r$-graph on $[n]$.  To this end, we define a sequence $(t_i)_{i=1}^m$ of positive reals, and a sequence $(\CF)_{i=0}^{m}$ of families of $r$-graphs as follows. We set $t_1 = \binom{n}{r} / n^{r-1}$ and define $t_i = (1-1/L)t_{i-1}$, with $t_m$ being the first term of this sequence to satisfy $t_m \le 2L$. We take $\CF_0$ to consist of a single $r$-graph on $[n]$, namely, the complete $r$-graph on $[n]$, and for $1 \le i \le m$, we obtain $\CF_i$ from $\CF_{i-1}$ by replacing each $r$-graph $G \in \CF_{i-1}$ for which $|E(G)| > Ln^{r-1}$ by the collection $\CC(G)$ of its subgraphs guaranteed by Proposition~\ref{itercont}. 

Now, let $\CF = \CF_{m}$. It is clear that each $C^r_k$-free $r$-graph on $[n]$ is a subgraph of some $G \in \CF$. Furthermore, it is easy to see that $|E(G)| \le Ln^{r-1}$ for each $G \in \CF$. Finally, a simple induction using~\eqref{contbound} shows that
\[ |\CF| \le \exp \left( \sum_{i=1}^{m} \frac{L n^{r-1}}{t_i^{1/4}}\right) \le \exp\left( 10 L^2 n^{r-1}\right), \]
where the last inequality holds on account of the sequence $(t_i)_{i=1}^m$ decreasing geometrically. 

It follows that the number of $C^r_k$-free $r$-graphs on $[n]$ is bounded above by
\[ \sum_{G \in \CF} 2^{|E(G)|} \le |\CF| \exp\left(L n^{r-1}\right) \le  \exp\left(10L^2n^{r-1} + Ln^{r-1}\right);\]
the result follows, with room to spare, by setting $C(r,k) = 20L^2 + 2L$.
\end{proof}
\section{Conclusion}\label{s:conc}
Our results in this paper raise the natural question of deciding, for a fixed pair of integers $r,k \ge 3$, if it is the case that
\[ |\Forb_r (n ,C^r_k)| = 2^{(1+o(1))\ex_r (n,C^r_k)}  \]
as $n\to \infty$. While it is plausible that such an estimate is true, we conclude by warning the reader that an analogous estimate for the $2$-uniform $6$-cycle was shown \emph{not} to hold by Saxton and Morris~\citep{bip4}.

\section*{Acknowledgements}
The first and third authors were partially supported by NSF Grant DMS-1500121 and the first author also wishes to acknowledge support from an Arnold O. Beckman Research Award (UIUC Campus Research Board 15006) and the Langan Scholar Fund (UIUC).

Some of the research in this paper was carried out while the first author was a Visiting Fellow Commoner at Trinity College, Cambridge and the third author was visiting the University of Cambridge; we are grateful for the hospitality of both the College and the University.
\bibliographystyle{amsplain}
\bibliography{linear_cycles}

\end{document}